\title{Composite Rational Functions and Arithmetic Progressions}
\author{Sz. Tengely}
\address{
Mathematical Institute\newline
\indent University of Derecen\newline
\indent P.O. Box 12\newline
\indent 4010 Debrecen\newline
\indent Hungary}
\email{tengely@science.unideb.hu}
\thanks{Research supported in part by the OTKA grants NK104208 and K100339}
\keywords{composite rational functions, lacunary polynomials, arithmetic progressions}
\subjclass[2000]{Primary 11R58; Secondary 14H05, 12Y05}
\newtheorem{thm}{Theorem}
\newtheorem*{thmA}{Theorem A}
\newtheorem{prop}{Proposition}
\theoremstyle{definition}
\newtheorem*{rem}{Remark}
\begin{document}
\newcommand{\Q}{{\mathbb Q}}
\newcommand{\Z}{{\mathbb Z}}
\newcommand{\N}{{\mathbb N}}
\newcommand{\C}{{\mathbb C}}
\newcommand{\cK}{\mathcal{K}}
\newcommand{\OO}{{\mathcal O}}
\newcommand{\ord}{\mbox{\rm ord}}
\newcommand{\lcm}{\mbox{\rm lcm}}
\newcommand{\sign}{\mbox{\rm sign}}

\begin{abstract}
In this paper we deal with composite rational functions having zeros and poles forming consecutive elements of an arithmetic progression.
We also correct a result published in \cite{PT-CRF} related to composite rational functions having a fixed 
number of zeros and poles.
\end{abstract}

\bibliographystyle{plain}
\maketitle

\section{Introduction}
We consider a problem related to decompositions of polynomials and rational functions. In this subject a classical result obtained by Ritt \cite{Ritt} says 
that if there is a polynomial $f\in\mathbb{C}[X]$ satisfying certain tameness properties
and 
$$
f=g_1\circ g_2\circ\cdots\circ g_r=h_1\circ h_2\circ\cdots\circ h_s,
$$
then $r=s$ and $\{\deg g_1,\ldots,\deg g_r\}=\{\deg h_1,\ldots,\deg h_r\}.$ Ritt's fundamental result has been investigated, extended and applied
in various wide-ranging contexts (see e.g. \cite{BeNg, Fried1, FriedDioph, GutSev2, GutSev1, LeviRitt, ZannierRitt, Zcompoly}).
The above mentioned result is not valid for rational functions. Gutierrez and Sevilla \cite{GutSev2}
provided the following example 
\begin{eqnarray*}
f=\frac{x^3(x+6)^3(x^2-6x+36)^3}{(x-3)^3(x^2+3x+9)^3},\\
f=g_1\circ g_2\circ g_3=x^3\circ \frac{x(x-12)}{x-3}\circ \frac{x(x+6)}{x-3},\\
f=h_1\circ h_2=\frac{x^3(x+24)}{x-3}\circ \frac{x(x^2-6x+36)}{x^2+3x+9}.
\end{eqnarray*}
To determine decompositions of a given rational function there were developed algorithms (see e.g. \cite{AlonsoAlg, AyadAlg, BartonZippel}). 
In \cite{AyadAlg}, Ayad and Fleischmann implemented a MAGMA \cite{MAGMA} code to find decompositions, they provided the following example
$$
f=\frac{x^4-8x}{x^3+1}
$$
and they obtained that $f(x)=g(h(x)),$ where
$$
g=\frac{x^2+4x}{x+1}\quad\mbox{and}\quad h=\frac{x^2-2x}{x+1}.
$$
Fuchs and Peth\H{o} \cite{FPcrf} proved the following theorem.
\begin{thmA}\label{A}
Let $k$ be an algebraically closed field of characteristic zero.
Let $n$ be a positive integer. Then there exists a positive
integer $J$ and, for every $i\in\{1,\ldots,J\}$, an affine algebraic variety $V_i$ defined
over $\Q$ and with $V_i \subset \mathbb{A}^{n+t_i}$ for some $2 \leq t_i \leq n$, such that:

(i) If $f, g, h \in k(x)$ with $f(x)=g(h(x))$ and with $\deg g, \deg h \geq 2, g$ not
of the shape $(\lambda(x))^m , m \in \N, \lambda \in PGL_2(k),$ and $f$ has at most $n$ zeros
and poles altogether, then there exists for some $i\in\{1,\ldots, J\}$ a point
$P = (\alpha_1,\ldots,\alpha_n,\beta_1,\ldots,\beta_{t_i}) \in V_i(k)$, a vector 
$(k_1,\ldots,k_{t_i})\in \Z^{t_i}$
with $k_1 + k_2 + \ldots + k_{t_i} = 0$ depending only \footnote{in \cite{FPcrf} it is written as "or not depending", this typo is corrected here.} on $V_i$ , a partition of
$\{1,\ldots,n\}$ in $t_i + 1$ disjoint sets $S_{\infty}, S_{\beta_1},\ldots, S_{\beta_{t_i}}$ with 
$S_{\infty} = \emptyset$ if $k_1 + k_2 +\ldots+ k_{t_i} = 0$, and a vector $(l_1,\ldots,l_n)\in\{0,1,\ldots,n-1\}^n$, also both depending only on $V_i$, such that
\begin{equation*}
f(x)=\prod_{j=1}^{t_i}(\omega_j/\omega_{\infty})^{k_j},\quad 
g(x)=\prod_{j=1}^{t_i}(x-\beta_j)^{k_j}
\end{equation*}
and
\begin{equation*}
h(x)=
\begin{cases}
\beta_j+\frac{\omega_j}{\omega_{\infty}}\quad (j=1,\ldots,t_i), & \mbox{ if } k_1+k_2+\ldots+k_{t_i}\neq 0\\
\frac{\beta_{j_1}\omega_{j_2}-\beta_{j_2}\omega_{j_1}}{\omega_{j_2}-\omega_{j_1}}\quad (1\leq j_1<j_2\leq t_i), & \mbox{ otherwise,}
\end{cases}
\end{equation*}
where
\begin{equation*}
\omega_j=\prod_{m\in S_{\beta_j}}(x-\alpha_m)^{l_m},\quad j=1,\ldots,t_i
\end{equation*}
and
\begin{equation*}
\omega_{\infty}=\prod_{m\in S_{\infty}}(x-\alpha_m)^{l_m}.
\end{equation*}
Moreover, we have $deg~h \leq (n-1)/\max\{t_i-2,1\}\leq n-1.$

(ii) Conversely for given data $P\in V_i(k), (k_1,\ldots, k_{t_i}),$ $S_{\infty}, S_{\beta_1},\ldots, 
S_{\beta_{t_i}},$ $(l_1,\ldots, l_n )$ as described in (i) one defines by the same equations 
rational functions $f, g, h$ with $f$ having at most $n$ zeros and poles altogether for which 
$f(x) = g(h(x))$ holds.

(iii) The integer $J$ and equations defining the varieties $V_i$ are effectively
computable only in terms of $n.$
\end{thmA}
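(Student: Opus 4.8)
The plan is to \emph{reconstruct} the triple $(f,g,h)$ from the combinatorial and divisor-theoretic data carried by any decomposition $f=g\circ h$, and then to observe that the conditions forcing such data to actually come from a decomposition are \emph{polynomial} in the remaining free parameters; the resulting finite list of polynomial systems is exactly the list of varieties $V_i$.

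First I would normalise. Since replacing $(g,h)$ by $(g\circ\mu^{-1},\mu\circ h)$ for $\mu\in PGL_2(k)$ changes neither $f$ nor the hypothesis that $g$ is not of the form $(\lambda(x))^m$, I may assume $h=p/q$ with $p,q\in k[x]$ coprime and suitably normalised, and write $g(x)=c\prod_{j=1}^{t}(x-\beta_j)^{k_j}$ with the $\beta_j\in k$ pairwise distinct, $k_j\in\Z\setminus\{0\}$, $c\in k^{\times}$; here $t\ge 2$ by the hypotheses on $g$, and I put $e=\sum_jk_j$. The one analytic ingredient is the order formula $\ord_\alpha(f)=e_\alpha(h)\cdot\ord_{h(\alpha)}(g)$, which shows that the zeros and poles of $f$ are precisely the points of $h^{-1}(\{\beta_1,\dots,\beta_t\})$, together with $h^{-1}(\infty)$ when $e\ne0$, and that no cancellation occurs. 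Labelling all of these as $\alpha_1,\dots,\alpha_n$, with dummies allowed (this is why $(l_1,\dots,l_n)$ may have zero entries), defines the partition of $\{1,\dots,n\}$ into $S_{\beta_1},\dots,S_{\beta_t},S_\infty$ according to the value $h(\alpha_m)\in\{\beta_1,\dots,\beta_t,\infty\}$, with $S_\infty=\emptyset$ exactly when $e=0$; and I set $l_m=e_{\alpha_m}(h)$.

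Next I would write the fibres of $h$ out as divisors: for each $j$ one gets $p(x)-\beta_jq(x)=c_j\,\omega_j$ and $q(x)=c_\infty\,\omega_\infty$, where $\omega_j=\prod_{m\in S_{\beta_j}}(x-\alpha_m)^{l_m}$, $\omega_\infty=\prod_{m\in S_\infty}(x-\alpha_m)^{l_m}$, and $c_j,c_\infty\in k^{\times}$ (with care for the at most one index $j$ whose fibre meets $\infty$, where $\deg\omega_j$ drops). Substituting $h=p/q$ into $f=c\prod_j(h-\beta_j)^{k_j}=c\,q^{-e}\prod_j(p-\beta_jq)^{k_j}$ and absorbing constants yields the asserted $f(x)=\prod_j(\omega_j/\omega_\infty)^{k_j}$ together with $g(x)=\prod_j(x-\beta_j)^{k_j}$; and solving the relations $p-\beta_jq=c_j\omega_j$ for $p$ and $q$ in terms of two chosen indices $j_1,j_2$ when $e=0$ (where $\omega_\infty=1$), or reading off $h-\beta_j=\omega_j/\omega_\infty$ up to a scalar when $e\ne0$, gives the two displayed formulas for $h$. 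The substance of the theorem is that the \emph{remaining} relations $p-\beta_jq=c_j\omega_j$ --- equivalently, the consistency of the formula for $h$ across all admissible index choices, together with the requirement that each $\omega_j$ be the prescribed monomial in the $\alpha_m$ --- become, after eliminating $p$, $q$ and the scalars, a finite system of polynomial identities among coefficients; comparing coefficients turns these into polynomial equations over $\Q$ in the $n+t$ unknowns $\alpha_1,\dots,\alpha_n,\beta_1,\dots,\beta_t$, cutting out an affine variety in $\mathbb{A}^{n+t}$. Running over the finitely many choices of $t\in\{2,\dots,n\}$, of exponent vector $(k_1,\dots,k_t)$ (with $\sum k_j$ zero or not, dictating whether $S_\infty=\emptyset$), of partition, and of $(l_1,\dots,l_n)\in\{0,\dots,n-1\}^n$ then produces the list $V_1,\dots,V_J$. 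Part (ii) is the converse check: feeding any $k$-point of a $V_i$ together with compatible discrete data into the same formulas gives rational functions $f,g,h$ with $f=g(h)$ (by the coefficient identities) and with $f$ having at most $n$ zeros and poles; and (iii) holds because every step --- the enumeration, the Riemann--Hurwitz bound below, and the elimination --- is explicit in $n$.

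Finally, the degree bound. Apply Riemann--Hurwitz to $h\colon\mathbb{P}^1\to\mathbb{P}^1$ of degree $d=\deg h$, restricted to the fibres over $\beta_1,\dots,\beta_t$ (and over $\infty$ as well when $e\ne0$, where $S_\infty\ne\emptyset$). Since $\sum_{P\in h^{-1}(\beta_j)}(e_P-1)=d-|h^{-1}(\beta_j)|$, summing over $j$ gives $\sum_j\bigl(d-|h^{-1}(\beta_j)|\bigr)\le 2d-2$; and as $\sum_j|h^{-1}(\beta_j)|$ (plus $|h^{-1}(\infty)|$ when $e\ne0$) is the number of zeros and poles of $f$, hence at most $n$, this reads $(t-2)d\le n-2$. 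For $t\ge3$ that is $d\le(n-2)/(t-2)\le(n-1)/(t-2)$; for $t=2$ the hypothesis on $g$ forces $e\ne0$, so the extra fibre over $\infty$ is present and the three-fibre count gives $d\le n-2\le n-1$; in all cases $d\le(n-1)/\max\{t-2,1\}\le n-1$, and since each $S_{\beta_j}$ is nonempty we also get $t\le n$, which bounds the number of varieties. I expect the real obstacle to lie not in these steps but in the elimination: one must check that the relations among the $\alpha_m$ and $\beta_j$ really are polynomial and defined over $\Q$, handle uniformly all the degenerate configurations (fibres meeting $\infty$, drops in $\deg(p-\beta_jq)$, forced coincidences among the $\alpha_m$ and among the scalars $c_j$, and the precise normalisation of the scalars), and --- most importantly --- verify that the resulting finite list of varieties is genuinely \emph{exhaustive}, capturing every decomposition and not merely the generic ones.
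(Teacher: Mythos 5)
Theorem A is not proved in this paper at all: it is Fuchs and Peth\H{o}'s result, quoted from \cite{FPcrf}, and Section 2 only recalls the fragments of its proof that are needed later (the factorizations $q=\prod_{m\in S_\infty}(x-\alpha_m)^{l_m}$ and $p-\beta_j q=\prod_{m\in S_{\beta_j}}(x-\alpha_m)^{l_m}$ with $l_mk_j=f_m$, the consistency equation \eqref{EQ1}, and Siegel's identity \eqref{EQ2}). Your reconstruction of $(f,g,h)$ from the fibres of $h$ is exactly that strategy, and a Riemann--Hurwitz (equivalently Mason-type) count is the right tool for $\deg h\leq (n-1)/\max\{t-2,1\}$; note only that the point at infinity may lie in one of the fibres $h^{-1}(\beta_j)$ and is not among the $n$ finite zeros and poles, so the correct count is $\sum_j|h^{-1}(\beta_j)|\leq n+1$ (your $\leq n$ and the ensuing $d\leq n-2$ for $t=2$ are slightly too strong as stated, though the theorem's bound still follows).

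There are, however, genuine gaps. First, finiteness of $J$: you propose to run over ``finitely many choices of exponent vector $(k_1,\ldots,k_t)$'', but $(k_1,\ldots,k_t)$ ranges over the infinite set $\mathbb{Z}^t$, so as written your enumeration does not produce a finite list. The theorem's finiteness rests on the fact that the equations cutting out $V_i$ --- obtained by eliminating $p,q$ and the scalars from $p-\beta_jq=c_j\omega_j$, $q=c_\infty\omega_\infty$, i.e.\ from the consistency of the representations of $h$ --- involve only $\alpha_1,\ldots,\alpha_n,\beta_1,\ldots,\beta_t$ together with the discrete data $t$, the partition and $(l_1,\ldots,l_n)$, and depend on the $k_j$ only through whether $\sum_jk_j$ vanishes, the multiplicities of $f$ entering solely via $f_m=l_mk_j$; this observation must be made explicit, otherwise $J$ is not finite. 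Second, the scalars: your relations carry unknown constants $c_j,c_\infty$, whereas the theorem asserts the exact formulas $h=\beta_j+\omega_j/\omega_\infty$ with monic $\omega$'s. One must first normalize, e.g.\ postcompose $h$ with a M\"obius map (replacing $g$ accordingly, and checking that the hypotheses on $g$, the value of $t$ and the monicity conventions survive) so that $h(\infty)=\infty$, i.e.\ $\deg p>\deg q$; then every $c_j$ equals the leading coefficient of $p$ and a single affine rescaling removes all of them simultaneously. You flag this, together with the fibre-through-infinity degeneracies and the verification that the eliminated polynomial systems really capture every decomposition, as ``the real obstacle'' and leave it unproved --- but this is precisely the content that makes the uniform statement of Theorem A true, so the proposal is an outline in the spirit of \cite{FPcrf} rather than a complete proof.
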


Pethő and Tengely \cite{PT-CRF} provided some computational experiments that they obtained by using a MAGMA \cite{MAGMA} implementation of the algorithm of Fuchs and Peth\H{o} \cite{FPcrf}.

If the zeros and poles of a composite rational function form an arithmetic progression, then we have the following result.
\begin{thm}
Let $f,g,h$ be rational functions as in Theorem A. Assume that the zeros and poles of $f$ form an arithmetic progression, that is
$$
\alpha_i=\alpha_0+T_id
$$
for some $\alpha_0,d\in k$ and $T_i\in\{0,1,\ldots,n-1\}.$
If $k_1 + k_2 + \ldots + k_{t}\neq 0,$ then either the difference $d$ satisfies an equation of the form $$d^N=M$$
for some $N\in\mathbb{Z},M\in\mathbb{Q}$
or $(l_1,\ldots,l_n)\in\{0,1,\ldots,n-1\}^n$ satisfies a system of linear equations
$$
\sum_{r\in S_{\beta_i}}l_r=\sum_{s\in S_{\beta_j}}l_s,\quad i,j\in\{1,\ldots,t\},i\neq j.
$$
If $k_1 + k_2 + \ldots + k_{t}=0$ and $1\leq j_1<j_2<j_3\leq t,$ then 
$$d^{\sum_{m_1\in S_{\beta_{j_1}}}l_{m_1}},d^{\sum_{m_2\in S_{\beta_{j_2}}}l_{m_2}},d^{\sum_{m_3\in S_{\beta_{j_3}}}l_{m_3}}$$
satisfy a system of linear equations and
$\beta_{j_1},\beta_{j_2},\beta_{j_3}$
satisfy a system of linear equations.
\end{thm}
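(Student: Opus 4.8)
The plan is to feed the explicit shape of $f,g,h$ from Theorem~A into the arithmetic progression hypothesis via the change of variable $x=\alpha_0+d\,u$. Since then $x-\alpha_m=d(u-T_m)$, one gets
$$
\omega_j(x)=d^{L_j}\tilde\omega_j(u),\quad \tilde\omega_j(u)=\prod_{m\in S_{\beta_j}}(u-T_m)^{l_m},\quad L_j:=\sum_{m\in S_{\beta_j}}l_m,
$$
and likewise $\omega_\infty(x)=d^{L_\infty}\tilde\omega_\infty(u)$ with $L_\infty:=\sum_{m\in S_\infty}l_m$. The point is that $\tilde\omega_j$ and $\tilde\omega_\infty$ are \emph{monic polynomials with integer coefficients}, because $T_m,l_m\in\{0,\dots,n-1\}$. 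We may assume $d\neq 0$ (else $d^1=0\in\Q$), and we recall that $\beta_1,\dots,\beta_t$ are pairwise distinct.

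Consider first the case $k_1+\dots+k_t\neq 0$. Because $h(x)=\beta_j+\omega_j/\omega_\infty$ must be the same rational function for every $j$, clearing denominators gives the polynomial identity $\omega_j=\omega_1+(\beta_1-\beta_j)\omega_\infty$ for all $j$. Relabel so that $L_1=\max_j L_j$ and compare leading coefficients, using that each $\omega_j,\omega_\infty$ is monic in $x$. If $L_1>L_\infty$, then every $\omega_j$ has degree $L_1$, i.e.\ $L_j=L_1$ for all $j$, which is precisely the linear system $\sum_{r\in S_{\beta_i}}l_r=\sum_{s\in S_{\beta_j}}l_s$. The case $L_1<L_\infty$ is impossible, since then $\omega_j$ ($j\neq1$) would have degree $L_\infty>\max_j L_j$. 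If $L_1=L_\infty$, the coefficient of $x^{L_1}$ in $\omega_j$ equals $1+\beta_1-\beta_j$, so by monicity $\beta_j=\beta_1+1$ for every $j\neq1$; distinctness of the $\beta$'s then forces $t=2$ and $\omega_2=\omega_1-\omega_\infty$. After the substitution this last identity reads $d^{L_2}\tilde\omega_2(u)=d^{L_1}\bigl(\tilde\omega_1(u)-\tilde\omega_\infty(u)\bigr)$; the right hand factor is a nonzero polynomial in $\Z[u]$, which must therefore have degree $L_2<L_1$, and comparing its leading coefficient $c\in\Z\setminus\{0\}$ with that of the left hand side gives $d^{\,L_1-L_2}=1/c$. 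Thus $d^N=M$ with $N=L_1-L_2\in\Z$ and $M\in\Q$, as claimed.

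Now suppose $k_1+\dots+k_t=0$, so $S_\infty=\emptyset$ and $h(x)=\dfrac{\beta_{j_1}\omega_{j_2}-\beta_{j_2}\omega_{j_1}}{\omega_{j_2}-\omega_{j_1}}$ for all $j_1<j_2$. Fix a triple $j_1<j_2<j_3$ (so $t\geq3$). Equating the two expressions attached to the pairs $(j_1,j_2)$ and $(j_1,j_3)$ and clearing denominators, the factor $\omega_{j_1}$ splits off and leaves the identity
$$
(\beta_{j_3}-\beta_{j_1})\omega_{j_2}+(\beta_{j_1}-\beta_{j_2})\omega_{j_3}+(\beta_{j_2}-\beta_{j_3})\omega_{j_1}=0 .
$$
After the substitution $x=\alpha_0+d\,u$ this becomes an identity in $u$ whose coefficients are, term by term, simultaneously linear in the triple $(d^{L_{j_1}},d^{L_{j_2}},d^{L_{j_3}})$ and linear in $(\beta_{j_1},\beta_{j_2},\beta_{j_3})$. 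Its top coefficient, combined with $(\beta_{j_3}-\beta_{j_1})+(\beta_{j_1}-\beta_{j_2})+(\beta_{j_2}-\beta_{j_3})=0$, forces $L_{j_1}=L_{j_2}=L_{j_3}$ (any other way of attaining the maximal degree would leave a nonzero difference of $\beta$'s as the leading coefficient), which is the asserted linear system for the powers of $d$; the next coefficient then supplies a linear relation among $\beta_{j_1},\beta_{j_2},\beta_{j_3}$, with integer coefficients built from the quantities $\sum_{m\in S_{\beta_j}}l_mT_m$. This yields both linear systems in the statement.

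The main obstacle is the case distinction in the first case: one must check that $L_1>L_\infty$, $L_1<L_\infty$, $L_1=L_\infty$ exhaust the possibilities and that the only way to avoid "all $L_j$ equal" is the degenerate configuration $t=2$, $\beta_2=\beta_1+1$; and there it is essential to track that $\tilde\omega_1,\tilde\omega_\infty$ have rational (indeed integer) coefficients, since this is exactly what turns the resulting constraint into an equation $d^N=M$ with $M\in\Q$ rather than merely $M\in k$. The remaining points—non-vanishing of the polynomials involved and the elementary-symmetric-function computation of the subleading coefficient—are routine.
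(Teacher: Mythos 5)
Your argument is correct, but it extracts the conclusions by a different mechanism than the paper. Both proofs start from the same structural inputs (the representations $h=\beta_j+\omega_j/\omega_\infty$, i.e.\ \eqref{EQ1}, and Siegel's identity \eqref{EQ2}), and both exploit that the arithmetic-progression substitution turns every difference $\alpha_u-\alpha_v$ into an integer multiple of $d$. The paper, however, does not clear denominators and compare degrees: it specializes $x$ at zeros $\alpha_{r_1}\in S_{\beta_i}$, $\alpha_{s_1}\in S_{\beta_j}$ with nonvanishing exponents, obtains two closed expressions for $\beta_i-\beta_j$, and divides them, getting $C_1(i,j,r_1,s_1)=d^{\sum_{r\in S_{\beta_i}}l_r-\sum_{s\in S_{\beta_j}}l_s}$ with $C_1\in\Q$; the dichotomy of the theorem is then read off from whether the exponent vanishes, and the degenerate situation $l_r=0$ for all $r\in S_{\beta_i}$ needs a separate specialization. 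In the zero-sum case the paper again specializes at three roots to get three equations linear in $d^{L_{j_1}},d^{L_{j_2}},d^{L_{j_3}}$ (where $L_j=\sum_{m\in S_{\beta_j}}l_m$) and in the $\beta$'s. Your route---rescaling $\omega_j=d^{L_j}\tilde\omega_j$ with $\tilde\omega_j\in\Z[u]$ monic, then degree and leading-coefficient comparison in $\omega_j=\omega_1+(\beta_1-\beta_j)\omega_\infty$ and in the Siegel identity---is uniform (no separate case for vanishing exponents) and yields slightly sharper structural information: the branch $d^N=M$ can only occur with $t=2$, $L_1=L_\infty$, $\beta_2=\beta_1+1$, and in the zero-sum case you even get $L_{j_1}=L_{j_2}=L_{j_3}$. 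The only spot that deserves one explicit sentence is the nonvanishing of $\tilde\omega_1-\tilde\omega_\infty$ in your subcase $L_1=L_\infty$; it follows because the left-hand side $d^{L_2}\tilde\omega_2$ is monic times $d^{L_2}\neq 0$ (equivalently, $\omega_1=\omega_\infty$ would force $h$ constant). What the paper's specialization buys instead is completely explicit relations in the data $(T_i,l_i)$, such as the displayed formula for $d^{l_4-l_3}$ in case $(II)$, which are exactly the equations used later in the computational proof of Proposition \ref{allitas}; your coefficient identities would serve the same purpose but are not written in that ready-to-use form.
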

We will apply the above theorem to determine composite rational functions having 4 zeros and poles. We prove the following statement.
\begin{prop}\label{allitas}
Let $k$ be an algebraically closed field of characteristic zero. If $f, g, h \in k(x)$ with $f(x)=g(h(x))$ and with $\deg g, \deg h \geq 2, g$ not
of the shape $(\lambda(x))^m , m \in \N, \lambda \in PGL_2(k),$ and $f$ has 4 zeros and poles altogether forming an arithmetic progression, then $f$ is equivalent to the following rational function
$$
(x-\alpha_0)^{k_1}(x-\alpha_0-d)^{k_2}(x-\alpha_0-2d)^{k_2}(x-\alpha_0-3d)^{k_1},
$$
for some $\alpha_0,d\in k$ and $k_1,k_2\in\mathbb{Z}, k_1+k_2\neq 0.$
\end{prop}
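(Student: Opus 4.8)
The strategy is to run the parametrisation of Theorem~A, together with Theorem~1, through the case $n=4$ and then sift the finitely many admissible configurations. Write $g(x)=\prod_{j=1}^{t}(x-\beta_j)^{k_j}$ as in Theorem~A, so $2\le t\le 4$. First I would pin down $t$: since $\deg h\ge 2$ while Theorem~A gives $\deg h\le (n-1)/\max\{t-2,1\}=3/\max\{t-2,1\}$, the value $t=4$ is impossible, so $t\in\{2,3\}$; and if $t=2$ then necessarily $k_1+k_2\neq0$, since otherwise $g=\bigl((x-\beta_1)/(x-\beta_2)\bigr)^{k_1}$ has the excluded shape $(\lambda(x))^m$. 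After relabelling the $\alpha_i$ we may assume the zeros and poles of $f$ are the four distinct points $\alpha_0+md$, $m\in\{0,1,2,3\}$; hence $d\neq0$ and $l_m\neq0$ for all $m$ (a point with $l_m=0$ would not be a zero or pole of $f$). The data then reduces to a partition $S_{\beta_1}\sqcup\dots\sqcup S_{\beta_t}\sqcup S_\infty=\{0,1,2,3\}$ together with the monic polynomials $\omega_j=\prod_{m\in S_{\beta_j}}(x-\alpha_0-md)^{l_m}$ and $\omega_\infty=\prod_{m\in S_\infty}(x-\alpha_0-md)^{l_m}$, all of degree $\le3$.

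Next I would exploit the compatibility relations forced by the several expressions for $h$ in Theorem~A. When $k_1+\dots+k_t\neq0$, equating $h=\beta_i+\omega_i/\omega_\infty=\beta_j+\omega_j/\omega_\infty$ gives $\omega_i-\omega_j=(\beta_j-\beta_i)\,\omega_\infty$ for all $i,j$. If $(l_m)$ satisfies the linear system of Theorem~1 — equivalently $\deg\omega_1=\dots=\deg\omega_t=:L$ — this relation also yields $\deg\omega_\infty\le L-1$, and $L=\deg h\in\{2,3\}$; if instead $d^N=M$ (the other alternative of Theorem~1, which goes with $\deg\omega_i\neq\deg\omega_j$), the same relation forces $\deg\omega_\infty=\max_j\deg\omega_j$ and $\beta_j-\beta_i=1$, after which comparing the remaining coefficients leaves no solution with $l_m\in\{1,2,3\}$. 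So we may assume $\deg\omega_1=\dots=\deg\omega_t=L\in\{2,3\}$. Taking $t=2$ and running through the few possible shapes of $S_{\beta_1}\sqcup S_{\beta_2}\sqcup S_\infty$ and of $(l_m)$ — substituting the explicit monic products into $\omega_1-\omega_2=(\beta_2-\beta_1)\omega_\infty$ and comparing coefficients — every case collapses except $S_\infty=\emptyset$, $|S_{\beta_1}|=|S_{\beta_2}|=2$, $L=2$, where comparing the subleading coefficient forces $l_m=1$ for all $m$ and the splitting $\{0,3\}\sqcup\{1,2\}$. This gives $\omega_1=(x-\alpha_0)(x-\alpha_0-3d)$, $\omega_2=(x-\alpha_0-d)(x-\alpha_0-2d)$ and, as $\omega_\infty=1$,
$$f=\omega_1^{k_1}\omega_2^{k_2}=(x-\alpha_0)^{k_1}(x-\alpha_0-d)^{k_2}(x-\alpha_0-2d)^{k_2}(x-\alpha_0-3d)^{k_1},\qquad k_1+k_2\neq0,$$
which is the asserted form.

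It remains to rule out $t=3$. Here all three $S_{\beta_j}$ are nonempty (else some $\omega_j=1$ has degree $0\neq L$), so with the resulting bound on $|S_\infty|$ there are only a handful of size patterns, and in each of them at least two of the $\omega_j$ are pure powers $(x-\alpha_0-rd)^L$, $(x-\alpha_0-sd)^L$ with $r\neq s$. If $k_1+k_2+k_3\neq0$ this is incompatible with $\omega_i-\omega_j=(\beta_j-\beta_i)\omega_\infty$ and $\deg\omega_\infty\le L-1$ (a difference of two distinct $L$-th powers has degree exactly $L-1$, and comparing its coefficients with those of $(\beta_j-\beta_i)\omega_\infty$ is impossible). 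If $k_1+k_2+k_3=0$, then $S_\infty=\emptyset$, and equating the expressions for $h$ coming from the pairs $(1,2)$ and $(1,3)$ gives the polynomial identity $(\beta_2-\beta_3)\,\omega_1+(\beta_3-\beta_1)\,\omega_2+(\beta_1-\beta_2)\,\omega_3=0$; comparing coefficients and eliminating the $\beta_j$ leaves a single numerical condition on the two positions $r,s$ occurring in the singleton sets — for $L=2$ it reads $3(r+s)^2-18(r+s)+22=0$ — which has no admissible integer solution, the degenerate subcases forcing $\beta_1=\beta_2$. Hence $t=3$ cannot occur, and combining with the previous step finishes the proof.

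I expect the hardest point to be the case $t=3$ with $k_1+k_2+k_3=0$: there the crude bookkeeping of zeros and poles is perfectly consistent with a non-palindromic $f$, so the contradiction must be squeezed out of the finer requirement that the three rational expressions for $h$ literally agree, i.e.\ from the identity $(\beta_2-\beta_3)\omega_1+(\beta_3-\beta_1)\omega_2+(\beta_1-\beta_2)\omega_3=0$. Everything else — the $t=2$ analysis, the exclusion of the $d^N=M$ alternative, and the case $t=3$ with $k_1+k_2+k_3\neq0$ — is a finite, essentially mechanical comparison of coefficients, once one organises it around the single identity $\omega_i-\omega_j=(\beta_j-\beta_i)\omega_\infty$ and the partition of $\{0,1,2,3\}$.
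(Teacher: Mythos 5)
Your overall strategy is the same as the paper's: feed the Fuchs--Peth\H{o} parametrization and Theorem 1 into the case $n=4$, use the arithmetic-progression structure $\alpha_m=\alpha_0+T_md$, $\{T_m\}=\{0,1,2,3\}$, and eliminate every configuration except the one giving the stated $f$. Several of your shortcuts are genuine improvements on the paper's bookkeeping: killing $t=4$ at once via $\deg h\le (n-1)/\max\{t-2,1\}$, observing that exactly four zeros and poles forces all $l_m\neq 0$ (which prunes the many degenerate subcases the paper checks by hand), working with the identity $\omega_i-\omega_j=(\beta_j-\beta_i)\omega_\infty$ as a polynomial identity rather than evaluated at points, and your condition $3(r+s)^2-18(r+s)+22=0$ in the $t=3$, $k_1+k_2+k_3=0$ case is exactly what the vanishing of the relevant $3\times 3$ determinant gives, and it indeed has no admissible solution.

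There are, however, concrete gaps. First, in the case $t=3$, $k_1+k_2+k_3\neq 0$, $S_\infty\neq\emptyset$, your argument that ``a difference of two distinct $L$-th powers \dots comparing its coefficients with those of $(\beta_j-\beta_i)\omega_\infty$ is impossible'' is false as stated for $L=2$: $(x-u)^2-(x-v)^2=2(v-u)\bigl(x-\tfrac{u+v}{2}\bigr)$ matches $(\beta_j-\beta_i)\omega_\infty$ perfectly with $\alpha_e=\tfrac{u+v}{2}$; the contradiction only appears after comparing two different pairs (forcing two of the $\alpha$'s to coincide), so this step needs repair. Second, the branch you dismiss with ``comparing the remaining coefficients leaves no solution with $l_m\in\{1,2,3\}$'' (the $S_\infty\neq\emptyset$, unequal-degree alternative) is precisely where the paper does most of its work (its case (II)): for exponent patterns such as $(l_1,l_2,l_3,l_4)=(1,1,1,2)$ or $(1,1,2,1)$ the coefficient identities are solvable for generic $\alpha$'s, and the exclusion comes only from Diophantine conditions on the positions, e.g.\ $d=\tfrac{1}{T_1+T_2-2T_3}=\tfrac{T_2-T_4}{(T_2-T_3)^2}$ having no solution with $(T_1,T_2,T_3,T_4)$ a permutation of $(0,1,2,3)$; about a dozen such systems must actually be checked, and your sketch neither performs nor even enumerates them (this, not the $k_1+k_2+k_3=0$ case, is the heaviest part). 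Third, in the $k_1+k_2+k_3=0$ case you treat only the equal-degree tuple $(1,1,2,2)$; your own setup allows one $\omega_j$ of smaller degree and $\deg h=3$ (tuples such as $(1,1,2,1)$, $(1,1,3,3)$, $(1,2,3,3)$), and these are not addressed (the paper asserts, on the basis of its computational enumeration, that $(1,1,2,2)$ is the only tuple with $\deg h>1$; your proof cannot simply borrow that assertion without justification). Until these finite verifications are actually carried out, the argument is a correct plan rather than a proof.
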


In this paper we correct results obtained in \cite{PT-CRF}, where the computations related to the case $k_1 + k_2 + \ldots + k_{t}\neq 0, S_{\infty}=\emptyset$
are missing. 
%First we introduce the result obtained by Fuchs and Peth\H{o} \cite{FPcrf} and the notation we will use later on.
The following theorem is the corrected version of Theorem 1 from \cite{PT-CRF}, where part (c) was missing. We define equivalence of rational functions.
Two rational functions $f_1(x)=\prod_{u=1}^n(x-\alpha_u^{(1)})^{f_u^{(1)}}$ and $f_2(x)=\prod_{u=1}^n(x-\alpha_u^{(2)})^{f_u^{(2)}}$
are equivalent if there exist $a_{u,v}\in\mathbb{Q},u\in\{1,2,\ldots,n\},v\in\{1,2,\ldots,n+1\}$ such that 
$$
\alpha_u^{(1)}=a_{u,1}\alpha_1^{(2)}+a_{u,2}\alpha_2^{(2)}+\ldots+a_{u,n}\alpha_n^{(2)}+a_{u,n+1},
$$
for all $u\in\{1,2,\ldots,n\}.$
\begin{thm}
Let $k$ be an algebraically closed field of characteristic zero. If $f, g, h \in k(x)$ with $f(x)=g(h(x))$ and with $\deg g, \deg h \geq 2, g$ not
of the shape $(\lambda(x))^m , m \in \N, \lambda \in PGL_2(k),$ and $f$ has 3 zeros and poles altogether, then $f$ is equivalent to one of the following rational functions
\begin{itemize}
\item[(a)] $\frac{(x-\alpha_1)^{k_1}(x+1/4-\alpha_1)^{2k_2}}{(x-1/4-\alpha_1)^{2k_1+2k_2}}$ for some $\alpha_1\in k$ and $k_1,k_2\in\mathbb{Z}, k_1+k_2\neq 0,$
\item[(b)] $\frac{(x-\alpha_1)^{2k_1}(x+\alpha_1-2\alpha_2)^{2k_2}}{(x-\alpha_2)^{2k_1+2k_2}}$ for some $\alpha_1,\alpha_2\in k$ and $k_1,k_2\in\mathbb{Z}, k_1+k_2\neq 0,$
\item[(c)] $\left(x-\frac{\alpha_1+\alpha_2}{2}\right)^{2k_1}(x-\alpha_1)^{k_2}(x-\alpha_2)^{k_2}$ for some $\alpha_1,\alpha_2\in k$ and $k_1,k_2\in\mathbb{Z}, k_1+k_2\neq 0.$
\end{itemize}
\end{thm}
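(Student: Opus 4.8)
The plan is to apply Theorem~A with $n=3$ and run the finite case analysis it produces. Theorem~A(i) gives $2\le t\le 3$ and $\deg h\le(n-1)/\max\{t-2,1\}$, which equals $2$ in both cases; together with $\deg h\ge 2$ this forces $\deg h=2$. Put $\sigma=k_1+\dots+k_t$, and recall that $S_\infty=\emptyset$ whenever $\sigma=0$. Deleting indices $m$ with $l_m=0$ and absorbing indices $j$ with $k_j=0$ into a smaller $t$, we may assume every $l_m\in\{1,2\}$ (as $l_m\in\{0,\dots,n-1\}$) and every $k_j\ne 0$, and that $\alpha_1,\alpha_2,\alpha_3$ are distinct; then these are exactly the three finite zeros and poles of $f$. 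What remains is to classify the data $(t,\ \mathrm{sign}\,\sigma,\ \text{the partition }\{1,2,3\}=S_\infty\sqcup S_{\beta_1}\sqcup\dots\sqcup S_{\beta_t},\ (l_1,l_2,l_3))$.

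The first reduction concerns the partition. If at least two of the $S_{\beta_j}$ are empty, matching the expressions for $h$ forces two of the $\beta$'s to coincide, i.e.\ the data collapses to a smaller $t$; so at most one $S_{\beta_j}$ is empty, leaving finitely many partitions. For $t=3$ one is then reduced, after a short computation, to $l_1=l_2=l_3=2$, and the consistency of the several formulas for $h$ fails: if $\sigma\ne 0$ it would force $(x-\alpha_1)^2-(x-\alpha_2)^2\equiv\mathrm{const}$, and if $\sigma=0$ it would force the identity $(\beta_2-\beta_3)(x-\alpha_1)^2+(\beta_3-\beta_1)(x-\alpha_2)^2+(\beta_1-\beta_2)(x-\alpha_3)^2\equiv 0$, which has only the trivial solution because the underlying $3\times3$ matrix is a Vandermonde matrix at the distinct nodes $\alpha_j$. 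For $t=2$ and $\sigma=0$ one has $g=((x-\beta_1)/(x-\beta_2))^{k_1}$, which is of the excluded shape $\lambda(x)^m$. Hence only $t=2$, $\sigma\ne 0$ survives.

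In that case the requirement that the several expressions $h=\beta_j+\omega_j/\omega_\infty$ (including $h=\beta_{j'}+1/\omega_\infty$ when $S_{\beta_{j'}}=\emptyset$) define one and the same function reduces to linear conditions of the form $\omega_1-\omega_2=c\,\omega_\infty$ or $\omega_j-1=c'\,\omega_\infty$. I would solve these for each admissible $(l_1,l_2,l_3)$, retaining only those for which $\deg h=2$, the $\alpha_m$ remain distinct and $g$ is not a M\"obius power; a short check leaves only finitely many configurations. In each of them the solution pins $\alpha_1,\alpha_2,\alpha_3$ down so that they form an arithmetic progression — for example, with $S_\infty=\{3\}$, $S_{\beta_1}=\{1\}$, $S_{\beta_2}=\{2\}$ and $(l_1,l_2,l_3)=(1,2,2)$, writing $u=x-\alpha_2$ the condition reads $-u^2+u+(\alpha_2-\alpha_1)=c\,(u+\alpha_2-\alpha_3)^2$, forcing $c=-1$, $\alpha_1=\alpha_2+\tfrac14$, $\alpha_3=\alpha_2+\tfrac12$. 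Reading off $f=\prod_j\omega_j^{k_j}\cdot\omega_\infty^{-\sigma}$ and matching it, via the equivalence of the statement, to the normalised representatives yields exactly the list: the configurations with $S_\infty\ne\emptyset$ reproduce forms~(a) and~(b), while those with $S_\infty=\emptyset$ — the ones omitted in \cite{PT-CRF} — give form~(c). Conversely Theorem~A(ii) shows that each of (a), (b), (c) with admissible parameters does arise in this way, so the list is complete.

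The hard part is not any single step — the linear systems above are routine — but making the enumeration airtight: one must be certain that no branch of the data is left out, in particular the branch $\sigma\ne 0$, $S_\infty=\emptyset$ whose omission is exactly the error being corrected, and one must invoke the hypothesis ``$g$ not of the shape $\lambda(x)^m$'' correctly — it is this hypothesis, rather than the bound on $\deg h$, that disposes of the case $t=2$, $\sigma=0$ and of the other configurations in which the intermediate function $g$ would degenerate to a power.
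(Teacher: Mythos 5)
Your enumeration is not airtight at exactly the point you flag as critical, and the branch you discard wrongly is not the omitted one from \cite{PT-CRF} but the case $\sigma=k_1+\dots+k_t=0$, $t=3$. Your reduction of $t=3$ to $l_1=l_2=l_3=2$ is false: when $\sigma=0$ one has $p-\beta_jq=c_j\prod_{m\in S_{\beta_j}}(x-\alpha_m)^{l_m}$, and since the leading terms of $p$ and $\beta_j q$ can cancel for at most one $j$, the degree can drop for one class, so tuples such as $(l_1,l_2,l_3)=(2,2,1)$ survive, as do partitions with one empty class $S_{\beta_j}$ (where $p-\beta_jq$ is a nonzero constant); your Siegel--Vandermonde argument only excludes $(2,2,2)$. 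These surviving configurations produce genuine decompositions, e.g.
\begin{equation*}
f(x)=\frac{-(x-1)^2(x+1)^2}{4x^2}=g(h(x)),\qquad g(x)=\frac{x(x+2)}{(x+1)^2},\qquad h(x)=\frac{-(x-1)^2}{x^2+1},
\end{equation*}
which has $t=3$, $\sigma=1+1-2=0$, $\deg g=\deg h=2$, $g$ not a M\"obius power, and exactly three zeros and poles. Hence your conclusion that ``only $t=2$, $\sigma\neq0$ survives'' is wrong, and the completeness of the list (a)--(c) is not established by your argument. The inconsistency also shows internally: any $f$ arising from a $t=2$, $\sigma\neq0$ configuration with $n=3$ has nonzero exponent sum (compare degrees in $(\beta_1-\beta_2)\omega_\infty=\omega_2-\omega_1$), whereas family (b) has exponent sum $2k_1+2k_2-(2k_1+2k_2)=0$; so (b) cannot be ``reproduced'' by the branch you retain except by silently invoking the root-only equivalence, and the $\sigma=0$ configurations you deleted are precisely where such exponent patterns live. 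A smaller leak of the same kind: for $t=3$, $\sigma\neq0$, $S_\infty\neq\emptyset$ (one empty class, $|S_\infty|=1$) the exponents need not all be $2$, so this sub-branch is also not covered by your stated reduction, even though it can be killed by a direct degree comparison.

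Note also that your route differs from the paper's: the paper does not redo the whole classification but cites \cite{PT-CRF} for all cases with $\sigma=0$ and with $\sigma\neq0$, $S_\infty\neq\emptyset$ (these yield (a) and (b)), and then only works through the previously missing branch $\sigma\neq0$, $S_\infty=\emptyset$, enumerating the $18$ systems for $t=2$ and $6$ systems for $t=3$ produced by the Fuchs--Peth\H{o} algorithm; there only the configuration of type $S_{\beta_1}=\{1,2\}$, $S_{\beta_2}=\{3\}$, $(l_1,l_2,l_3)=(1,1,2)$ gives a nontrivial $h$, which is family (c), all other systems forcing $\deg h=1$. A self-contained hand classification like yours is feasible, but it must treat the $\sigma=0$, $t=3$ case (degree-drop and empty-class subcases included) honestly rather than discard it.
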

\begin{rem}
The MAGMA procedure \texttt{CompRatFunc.m} can be downloaded from 
\texttt{http://shrek.unideb.hu/$\sim$tengely/CompRatFunc.m}.
All systems in cases of $n\in\{3,4,5\}$ can be downloaded from\\ \texttt{http://shrek.unideb.hu/$\sim$tengely/CFunc345.tar.gz}.
\end{rem}
\begin{rem}
It is interesting to note that in the above formulas the zeros and poles form an arithmetic progression
\begin{eqnarray*}
&& \mbox{(a):}\quad \alpha_1-\frac{1}{4},\alpha_1,\alpha_1+\frac{1}{4}\quad\mbox{ difference: }\frac{1}{4},\\
&& \mbox{(b):}\quad \alpha_1,\alpha_2,-\alpha_1+2\alpha_2\quad\mbox{ difference: }\alpha_2-\alpha_1,\\
&& \mbox{(c):}\quad \alpha_1,\frac{\alpha_1+\alpha_2}{2},\alpha_2\quad\mbox{ difference: }\frac{\alpha_2-\alpha_1}{2}.
\end{eqnarray*}
\end{rem}
\section{Auxiliary results}
We repeat some parts of the proof of Theorem A from \cite{FPcrf} that will be used here later on.
Without loss of generality we may assume that $f$ and $g$ are monic. Let
$$
f(x)=\prod_{i=1}^{n}(x-\alpha_i)^{f_i}
$$
with pairwise distinct $\alpha_i\in k$ and $f_i\in\Z$ for $i=1,\ldots,n.$ 
%(Remember that without loss of generality we are assuming that $f$ is monic.) 
Similarly, let
$$
g(x)=\prod_{j=1}^{t}(x-\beta_j)^{k_j}
$$
with pairwise distinct $\beta_j\in k$ and $k_j\in\Z$ for $j=1,\ldots, t$ and $t\in\N.$ 
Hence we have
$$
\prod_{i=1}^{n}(x-\alpha_i)^{f_i}=f(x)=g(h(x))=\prod_{j=1}^{t}(h(x)-\beta_j)^{k_j}.
$$
We shall write $h(x) = p(x)/q(x)$ with $p, q \in k[x], p, q$ coprime.
Fuchs and Peth\H{o} \cite{FPcrf} showed that if $k_1+k_2+\ldots+k_{t}\neq 0,$ then there is a subset $S_{\infty}$ of the set $\{1,\ldots,n\}$
for which
$$
q(x)=\prod_{m\in S_{\infty}}(x-\alpha_m)^{l_m}
$$
and there is a partition of the set $\{1,\ldots,n\}\setminus S_{\infty}$ in $t$ disjoint non empty subsets 
$S_{\beta_1},\ldots, S_{\beta_{t}}$ such that
\begin{equation}\label{non-empty}
h(x)=\beta_j+\frac{1}{q(x)}\prod_{m\in S_{\beta_j}}(x-\alpha_m)^{l_m},
\end{equation}
where $l_m\in\N$ satisfies $l_m k_j = f_m$ for $m \in S_{\beta_j},$ and this holds true for every
$j=1,\ldots,t.$  
We get at least two different representations of $h,$ since we assumed that $g$ is not of the special shape
$(\lambda(x))^m.$ Therefore we get at least one equation of the form
\begin{equation}\label{EQ1}
\beta_i+\frac{1}{q(x)}\prod_{r\in S_{\beta_i}}(x-\alpha_r)^{l_r}=\beta_j+\frac{1}{q(x)}\prod_{s\in S_{\beta_j}}(x-\alpha_s)^{l_s}.
\end{equation}
If $k_1+k_2+\ldots+k_{t}=0,$ then we have 
$$
(p(x)-\beta_jq(x))^{k_j}=\prod_{m\in S_{\beta_j}}(x-\alpha_m)^{f_m}.
$$
Now we have that $t\geq 3,$ otherwise $g$ is in the special form we excluded. Siegel's identity provides the equations
in this case. That is if $1\leq j_1<j_2<j_3\leq t,$ then we have 
\begin{equation}\label{EQ2}
v_{j_1,j_2,j_3}+v_{j_3,j_1,j_2}+v_{j_2,j_3,j_1}=0,
\end{equation}
where
$$
v_{j_1,j_2,j_3}=(\beta_{j_1}-\beta_{j_2})\prod_{m\in S_{\beta_{j_3}}}(x-\alpha_m)^{l_m}.
$$
%\section{Main results}
\section{Proofs of Theorem 1 and Theorem 2}
\begin{proof}[Proof of Theorem 1] 
If $k_1+k_2+\ldots+k_{t}\neq 0$ and there exist $r_1\in S_{\beta_i},s_1\in S_{\beta_j}$ for some $i\neq j$ such that $l_{r_1}\neq 0$ and $l_{s_1}\neq 0,$ then it follows from \eqref{EQ1} that
\begin{eqnarray}
\beta_i-\beta_j&=&\frac{\prod_{s\in S_{\beta_j}}(\alpha_{r_1}-\alpha_s)^{l_s}}{\prod_{m\in S_{\infty}}(\alpha_{r_1}-\alpha_m)^{l_m}},\\
\beta_i-\beta_j&=&-\frac{\prod_{r\in S_{\beta_i}}(\alpha_{s_1}-\alpha_r)^{l_r}}{\prod_{m\in S_{\infty}}(\alpha_{s_1}-\alpha_m)^{l_m}}
\end{eqnarray}
for any appropriate $\alpha_{r_1}\in S_{\beta_i}$ and $\alpha_{s_1}\in S_{\beta_j}.$
Hence we obtain that
%$$
%-\frac{\prod_{m\in S_{\infty}}({s_1}-m)^{l_m}\prod_{s\in S_{\beta_j}}({r_1}-s)^{l_s}}{\prod_{m\in S_{\infty}}({r_1}-m)^{l_m}\prod_{r\in S_{\beta_i}}({s_1}-r)^{l_r}}=
%d^{\sum_{r\in S_{\beta_i}}l_r-\sum_{s\in S_{\beta_j}}l_s}.
%$$
$$
C_1(i,j,r_1,s_1)=d^{\sum_{r\in S_{\beta_i}}l_r-\sum_{s\in S_{\beta_j}}l_s},
$$
where $C_1(i,j,r_1,s_1)\in\mathbb{Q}.$ 
If there exist $S_{\beta_i}$ and $S_{\beta_j}$ for which 
$\sum_{r\in S_{\beta_i}}l_r-\sum_{s\in S_{\beta_j}}l_s\neq 0,$ then the possible values of $d$ satisfy equations of the form $x^N=M.$
Otherwise we get that 
$$
\sum_{r\in S_{\beta_i}}l_r=\sum_{s\in S_{\beta_j}}l_s,\quad i,j\in\{1,\ldots,t\},i\neq j.
$$

Let us consider the special case when $l_r=0$ for all $r\in S_{\beta_i}.$ If $l_s=0$ for all $s\in S_{\beta_j},$ then we get that 
$$
h(x)=\beta_i+\frac{1}{q(x)}=\beta_j+\frac{1}{q(x)}.
$$
Hence $\beta_i=\beta_j$ for some $i\neq j,$ a contradiction. Thus we may assume that there exists $s_1\in S_{\beta_j}$ for which $l_{s_1}\neq 0.$ In a similar way as in the above case it follows that 
\begin{eqnarray}
\beta_i-\beta_j&=&\frac{\prod_{s\in S_{\beta_j}}(\alpha_{r_1}-\alpha_s)^{l_s}}{\prod_{m\in S_{\infty}}(\alpha_{r_1}-\alpha_m)^{l_m}}-\frac{1}{\prod_{m\in S_{\infty}}(\alpha_{r_1}-\alpha_m)^{l_m}},\\
\beta_i-\beta_j&=&-\frac{1}{\prod_{m\in S_{\infty}}(\alpha_{s_1}-\alpha_m)^{l_m}}.
\end{eqnarray}
Therefore 
$$
d^{\sum_{s\in S_{\beta_j}}l_s}=C_2(i,j,r_1,s_1),
$$
where $C_2(i,j,r_1,s_1)\in\mathbb{Q}.$ Since $s_1>0$ we have that $\sum_{s\in S_{\beta_j}}l_s\neq 0,$ that is $d$ satisfies an appropriate polynomial equation.

If $k_1+k_2+\ldots+k_t=0,$ then there are at least 3 partitions and for any appropriate $r_1\in S_{\beta_{j_1}},r_2\in S_{\beta_{j_2}},r_3\in S_{\beta_{j_3}}$ (that is $l_{r_i}\neq 0,i=1,2,3$) equation \eqref{EQ2} implies that
{\small
\begin{eqnarray*}
(\beta_{j_3}-\beta_{j_1})\prod_{m_2\in S_{\beta_{j_2}}}(\alpha_{r_3}-\alpha_{m_2})^{l_{m_2}}+(\beta_{j_2}-\beta_{j_3})\prod_{m_1\in S_{\beta_{j_1}}}(\alpha_{r_3}-\alpha_{m_1})^{l_{m_1}}&=&0\\
(\beta_{j_1}-\beta_{j_2})\prod_{m_3\in S_{\beta_{j_3}}}(\alpha_{r_2}-\alpha_{m_3})^{l_{m_3}}+(\beta_{j_2}-\beta_{j_3})\prod_{m_1\in S_{\beta_{j_1}}}(\alpha_{r_2}-\alpha_{m_1})^{l_{m_1}}&=&0\\
(\beta_{j_1}-\beta_{j_2})\prod_{m_3\in S_{\beta_{j_3}}}(\alpha_{r_1}-\alpha_{m_3})^{l_{m_3}}+(\beta_{j_3}-\beta_{j_1})\prod_{m_2\in S_{\beta_{j_2}}}(\alpha_{r_1}-\alpha_{m_2})^{l_{m_2}}&=&0,
\end{eqnarray*}
}
that is a system of linear equations in $d_1,d_2,d_3,$
where $d_i=d^{\sum_{m_i\in S_{\beta_{j_i}}}l_{m_i}},i\in\{1,2,3\}$ and the statement follows. In a very similar way we obtain a system of equations if $l_{r}=0$ for all $r\in S_{\beta_{j_3}},$ the last two equations are as before, while on the left-hand side of the first one there is an additional term $\beta_{j_1}-\beta_{j_2}.$
\end{proof}
\begin{proof}[Proof of Theorem 2]
In \cite{PT-CRF} all cases are given with $k_1 + k_2 + \ldots + k_{t}=0$ and also with $k_1 + k_2 + \ldots + k_{t}\neq 0, S_{\infty}\neq\emptyset.$
Therefore it remains to deal with those cases with $k_1 + k_2 + \ldots + k_{t}\neq 0, S_{\infty}=\emptyset.$ First let $t=2.$ There are 18 systems of equations. Among these systems there 
are two types. The first one has only a single equation, e.g. when $S_{\beta_1}=\{1,2\},S_{\beta_2}=\{3\},(l_1,l_2,l_3)=(1,0,1),$ this equation is as follows
$$
\alpha_1-\alpha_3-\beta_1+\beta_2=0.
$$
Hence
$$
h(x)=\beta_1+(x-\alpha_1)=\beta_2+(x-\alpha_3)
$$
is a linear function. A system from the second type is given by $S_{\beta_1}=\{1,2\},S_{\beta_2}=\{3\},(l_1,l_2,l_3)=(1,1,2)$ and equations as follows
\begin{eqnarray*}
\alpha_1+\alpha_2-2\alpha_3&=&0\\
(\alpha_2-\alpha_3)^2-\beta_1+\beta_2&=&0.
\end{eqnarray*}
That is we obtain that
\begin{eqnarray*}
h(x)&=&\beta_2+\left(x-\frac{\alpha_1+\alpha_2}{2}\right)^2,\\
g(x)&=&\left(x-\beta_2-\left(\frac{\alpha_2-\alpha_1}{2}\right)^2\right)^{k_1}(x-\beta_2)^{k_2},\\
f(x)&=&\left(x-\frac{\alpha_1+\alpha_2}{2}\right)^{2k_1}(x-\alpha_1)^{k_2}(x-\alpha_2)^{k_2}.
\end{eqnarray*}
It is a decomposition of type (c) in the theorem.
Let $t=3.$ There are 6 systems of equations, all of the same type, e.g. $S_{\beta_1}=\{1\},S_{\beta_2}=\{2\},S_{\beta_3}=\{3\},(l_1,l_2,l_3)=(1,1,1)$
and
\begin{eqnarray*}
\alpha_1-\alpha_3-\beta_1+\beta_3&=&0\\
\alpha_2-\alpha_3-\beta_2+\beta_3&=&0.
\end{eqnarray*}
Hence the degree of $h$ is 1, that yields a trivial decomposition.
\end{proof}

\section{Proof of Proposition \ref{allitas}}
\begin{proof}[Proof of Proposition \ref{allitas}]
In this section we apply Theorem 1 to determine composite rational functions having zeros and poles 
as consecutive elements of certain arithmetic progressions. We need to handle the following cases
\begin{eqnarray*}
(I)&:& n=4 \mbox{ and } t\in\{2,3,4\}, k_1 + k_2 + \ldots + k_{t}\neq 0, S_{\infty}=\emptyset,\\
(II)&:& n=4 \mbox{ and } t\in\{2,3\}, k_1 + k_2 + \ldots + k_{t}\neq 0, S_{\infty}\neq\emptyset,\\
(III)&:& n=4 \mbox{ and } t\in\{3,4\}, k_1 + k_2 + \ldots + k_{t}=0, S_{\infty}=\emptyset.\\
\end{eqnarray*}
In the proof we use the notation of Theorem 1, that is we write 
$$
\alpha_i=\alpha_0+T_id,
$$
where $\alpha_0,d\in k$ and $\{T_1,T_2,T_3,T_4\}=\{0,1,2,3\}.$

\underline{$(I): t=2, \{|S_{\beta_1}|,|S_{\beta_2}|\}=\{1,3\}.$}
We may assume that $S_{\beta_1}=\{1\},S_{\beta_2}=\{2,3,4\}.$
We obtain that
\begin{eqnarray*}
	h(x)&=&\beta_1+(x-\alpha_1)^{l_1},\\
	h(x)&=&\beta_2+(x-\alpha_2)^{l_2}(x-\alpha_3)^{l_3}(x-\alpha_4)^{l_4}.
\end{eqnarray*}
Substituting $x=\alpha_2,\alpha_3,\alpha_4$ yields (assuming $l_2l_3l_4\neq 0$)
$$
(\alpha_2-\alpha_1)^{l_1}=(\alpha_3-\alpha_1)^{l_1}=(\alpha_4-\alpha_1)^{l_1}.
$$
Since the zeros and poles form an arithmetic progression one gets that either $d=0$ or $l_1=0.$ In the former case the zeros and poles are not distinct, a contradiction. In the latter case the degree of $h$ is less than 2, a contradiction as well. If two out of $l_2,l_3,l_4$  are equal to zero, then it follows that $l_1=1,$ hence the degree of $h$ is 1, a contradiction. If exactly one out of $l_2,l_3,l_4$ is zero, then $l_1=2$ and the corresponding $f$ has only 3 zeros and poles. As an example we consider the case $l_4=0.$ We obtain that 
$$
\alpha_1=\frac{\alpha_2+\alpha_3}{2}\quad\mbox{and}\quad\beta_2=\beta_1+\left(\frac{\alpha_2-\alpha_3}{2}\right)^2.
$$
It follows that $f(x)=\left(x-\frac{\alpha_2+\alpha_3}{2}\right)^2f_1(x),$
where $\deg f_1=2.$

\underline{$(I): t=2, \{|S_{\beta_1}|,|S_{\beta_2}|\}=\{2\}.$} Here we may assume that $S_{\beta_1}=\{1,2\},S_{\beta_2}=\{3,4\}.$
We get that
\begin{eqnarray*}
h(x)&=&\beta_1+(x-\alpha_1)^{l_1}(x-\alpha_2)^{l_2},\\
h(x)&=&\beta_2+(x-\alpha_3)^{l_3}(x-\alpha_4)^{l_4}.
\end{eqnarray*}
It follows that (assuming that $0\notin\{l_1,l_2,l_3,l_4\}$)
$$
(\alpha_1-\alpha_3)^{l_3}(\alpha_1-\alpha_4)^{l_4}=(\alpha_2-\alpha_3)^{l_3}(\alpha_2-\alpha_4)^{l_4}
$$
and 
$$
(\alpha_3-\alpha_1)^{l_1}(\alpha_3-\alpha_2)^{l_2}=(\alpha_4-\alpha_1)^{l_1}(\alpha_4-\alpha_2)^{l_2}.
$$
Using the fact that the zeros and poles form an arithmetic progression it turns out that one has to deal with 80 cases.
\begin{itemize}
\item There are 8 cases with $(l_1,l_2,l_3,l_4)=(1,1,1,1).$ We obtain equivalent solutions, so we only consider one of these. Let $\alpha_1=\alpha_0, \alpha_2=\alpha_0+3d.$ It follows that $\beta_2=\beta_1-2d^2.$ That is we have
\begin{eqnarray*}
g(x)&=&(x-\beta_1)(x-\beta_1+2d^2),\\
h(x)&=&\beta_1+(x-\alpha_0)(x-\alpha_0-3d),\\
f(x)&=&(x-\alpha_0)(x-\alpha_0-d)(x-\alpha_0-2d)(x-\alpha_0-3d).
\end{eqnarray*} 
\item There are 16 equivalent cases with $(l_1,l_2,l_3,l_4)\in\{(1,1,2,2),(2,2,1,1)\}.$
One obtains that $d^2=\pm\frac{1}{2}$ and $\beta_2=\beta_1\pm 1.$ One example from this family is given by 
\begin{eqnarray*}
	g(x)&=&(x-\beta_1)(x-\beta_1-1),\\
	h(x)&=&\beta_1+(x-\alpha_0-\sqrt{2}/2)^2(x-\alpha_0-\sqrt{2})^2,\\
	f(x)&=&(x-\alpha_0)\left(x-\alpha_0-\frac{\sqrt{2}}{2}\right)^2(x-\alpha_0-\sqrt{2})^2\left(x-\alpha_0-\frac{3\sqrt{2}}{2}\right)f_2(x),
\end{eqnarray*}
where $f_2(x)$ is a quadratic polynomial such that $f$ has more than 4 zeros and poles. We remark that if we use the equations related to $\beta_2$ we have
\begin{eqnarray*}
	g(x)&=&(x-\beta_2)(x-\beta_2+1),\\
	h(x)&=&\beta_2+(x-\alpha_0)(x-\alpha_0-3\sqrt{2}),\\
	f(x)&=&(x-\alpha_0)\left(x-\alpha_0-\frac{\sqrt{2}}{2}\right)(x-\alpha_0-\sqrt{2})\left(x-\alpha_0-\frac{3\sqrt{2}}{2}\right),
\end{eqnarray*}
that is we obtain a "solution" covered by the family given by the case $(l_1,l_2,l_3,l_4)=(1,1,1,1).$
\item There are 8 equivalent cases with  $(l_1,l_2,l_3,l_4)=(2,2,2,2).$ All of these cases can be eliminated in the same way. From the equation 
\begin{equation}\label{eq_d}
(\alpha_1-\alpha_3)^{l_3}(\alpha_1-\alpha_4)^{l_4}=-(\alpha_3-\alpha_1)^{l_1}(\alpha_3-\alpha_2)^{l_2}
\end{equation}
it follows that 
$$
d^{l_1+l_2-l_3-l_4}=\frac{(T_1-T_3)^{l_3}(T_1-T_4)^{l_4}}{-(T_3-T_1)^{l_1}(T_3-T_2)^{l_2}},
$$
where $\{T_1,T_2,T_3,T_4\}=\{0,1,2,3\}.$ The left-hand side is $d^0=1$ and the right-hand side is -1, a contradiction.
\item There are 16 equivalent cases with $(l_1,l_2,l_3,l_4)\in\{(1,1,3,3),(3,3,1,1)\}.$ As an example we handle the one with $(l_1,l_2,l_3,l_4)=(3,3,1,1)$ and
\begin{eqnarray*}
\alpha_1&=&\alpha_0,\\
\alpha_2&=&\alpha_0+3d,\\
\alpha_3&=&\alpha_0+2d,\\
\alpha_4&=&\alpha_0+d.
\end{eqnarray*}
Equation \eqref{eq_d} implies that either $d=0$ or $d^4=\frac{1}{4}.$ If $d^2=\frac{1}{2},$ then we get
\begin{eqnarray*}
	g(x)&=&(x-\beta_1)(x-\beta_1+1),\\
	h(x)&=&\beta_1+(x-\alpha_0)^3(x-\alpha_0-3\sqrt{2}/2)^3,\\
	f(x)&=&(x-\alpha_0)^3\left(x-\alpha_0-\frac{\sqrt{2}}{2}\right)(x-\alpha_0-\sqrt{2})\left(x-\alpha_0-\frac{3\sqrt{2}}{2}\right)^3f_3(x),
\end{eqnarray*}
where $f_3(x)$ is a quartic polynomial resulting an $f$ having more than 4 zeros and poles. 
If $d^2=-\frac{1}{2},$ then we get
\begin{eqnarray*}
	g(x)&=&(x-\beta_1)(x-\beta_1-1),\\
	h(x)&=&\beta_1+(x-\alpha_0)^3(x-\alpha_0-3\sqrt{-2}/2)^3,\\
	f(x)&=&(x-\alpha_0)^3\left(x-\alpha_0-\frac{\sqrt{-2}}{2}\right)(x-\alpha_0-\sqrt{-2})\left(x-\alpha_0-\frac{3\sqrt{-2}}{2}\right)^3f_4(x),
\end{eqnarray*}
where $f_4$ is a quartic polynomial and we get a contradiction in the same way as before.
\item There are 16 equivalent cases with $(l_1,l_2,l_3,l_4)\in\{(2,2,3,3),(3,3,2,2)\}.$ We handle the case with $(l_1,l_2,l_3,l_4)=(3,3,2,2)$ and 
\begin{eqnarray*}
	\alpha_1&=&\alpha_0+3d,\\
	\alpha_2&=&\alpha_0,\\
	\alpha_3&=&\alpha_0+2d,\\
	\alpha_4&=&\alpha_0+d.
\end{eqnarray*}
It follows from equation \eqref{eq_d} that $d=0$ or $d^2=\frac{1}{2}.$ Also we have that $\beta_2=\beta_1-1.$ In a similar way as in the above cases we obtain a composite function $f$ having 4 zeros and poles forming an arithmetic progression, but an additional quartic factor appears, a contradiction.
\item There are 8 equivalent cases with $(l_1,l_2,l_3,l_4)=(3,3,3,3).$ Here we consider the case with 
\begin{eqnarray*}
	\alpha_1&=&\alpha_0,\\
	\alpha_2&=&\alpha_0+3d,\\
	\alpha_3&=&\alpha_0+d,\\
	\alpha_4&=&\alpha_0+2d.
\end{eqnarray*}
It follows that $\beta_2=\beta_1-8d^6.$ As in the previous cases $g(h(x))$ has 4 zeros and poles coming from an arithmetic progression, but there is an additional quartic factor yielding a contradiction.

If $0 \in\{l_1,l_2,l_3,l_4\},$ then we have three possibilities. Either $\{l_1,l_2\}=\{l_3,l_4\}=\{0,1\}$ or $\{l_1,l_2\}=\{1\},\{l_3,l_4\}=\{0,2\}$ or $\{l_1,l_2\}=\{0,2\},\{l_3,l_4\}=\{1\}.$ In the first case the degree of $h$ is 1, a contradiction. The last two cases can be handled in the same way, therefore we only deal with the case $\{l_1,l_2\}=\{1\},\{l_3,l_4\}=\{0,2\}.$ Without loss of generality we may assume that $l_3=2,l_4=0.$ It follows that $\alpha_1=2\alpha_3-\alpha_2$ and $\beta_2=\beta_1-(\alpha_2-\alpha_3)^2.$ Thus
\begin{eqnarray*}
h(x)&=&\beta_1+(x-2\alpha_3+\alpha_2)(x-\alpha_2),\\
g(x)&=&(x-\beta_1)(x-\beta_1+(\alpha_2-\alpha_3)^2),\\
f(x)&=&(x-\alpha_2)(x-\alpha_3)^2(x-2\alpha_3+\alpha_2).
\end{eqnarray*}
We conclude that $f(x)$ has only 3 zeros and poles, a contradiction.

\underline{$(I): t=3, |S_{\beta_1}|=|S_{\beta_2}|=1, |S_{\beta_3}|=2.$}
Here we may assume that $S_{\beta_1}=\{1\},S_{\beta_2}=\{2\},S_{\beta_3}=\{3,4\},$ that is one has
\begin{eqnarray*}
	h(x)&=&\beta_1+(x-\alpha_1)^{l_1},\\
	h(x)&=&\beta_2+(x-\alpha_2)^{l_2},\\
	h(x)&=&\beta_3+(x-\alpha_3)^{l_3}(x-\alpha_4)^{l_4},\\
\end{eqnarray*}
where $l_1,l_2\in\{2,3\}.$ Let us consider the case $l_3\neq 0, l_4\neq 0.$ Substitute $\alpha_3,\alpha_4$ into the above system of equations to get
\begin{eqnarray*}
	\beta_3&=&\beta_1+(\alpha_3-\alpha_1)^{l_1},\\
	\beta_3&=&\beta_2+(\alpha_3-\alpha_2)^{l_2},\\
	\beta_3&=&\beta_1+(\alpha_4-\alpha_1)^{l_1},\\
	\beta_3&=&\beta_2+(\alpha_4-\alpha_2)^{l_2}.
\end{eqnarray*}
These equations imply that $\alpha_i=\alpha_j$ for some $i\neq j,$ a contradiction.
Now assume that $l_4=0,$ hence $l_3=2$ or 3. We can reduce the system as follows
\begin{eqnarray*}
(\alpha_1-\alpha_2)^{l_2}+(\alpha_2-\alpha_1)^{l_1}&=&0,\\
(\alpha_1-\alpha_3)^{l_3}+(\alpha_3-\alpha_1)^{l_1}&=&0,\\
(\alpha_2-\alpha_3)^{l_3}+(\alpha_3-\alpha_2)^{l_2}&=&0,
\end{eqnarray*}
where $l_1,l_2,l_3\in\{2,3\}.$ We get a contradiction in all these cases.
%and one of $l_3, l_4$ is non-zero, assume that $l_3\neq 0.$ Substituting $\alpha_1,\alpha_2$ and $\alpha_3$ into the above system yields 
%\begin{eqnarray*}
%(\alpha_3-\alpha_2)^{l_2}-(\alpha_3-\alpha_1)^{l_1}-(\alpha_1-\alpha_2)^{l_2}&=&0,\\
%(\alpha_3-\alpha_2)^{l_2}-(\alpha_3-\alpha_1)^{l_1}+(\alpha_2-\alpha_1)^{l_1}&=&0,\\
%(\alpha_1-\alpha_2)^{l_2}+(\alpha_2-\alpha_1)^{l_1}&=&0.
%\end{eqnarray*}
%If $l_1=l_2=2,$ then after computing the Gröbner basis of the ideal generated by the above polynomials we obtain that either $\alpha_1=\alpha_2$ or $\alpha_2=\alpha_3,$ a contradiction. In a similar way if $l_1=l_2=3,$ then it follows that either $\alpha_1=\alpha_2$ or $\alpha_1=\alpha_3$ or $\alpha_2=\alpha_3,$ a contradiction. If $l_1=2$ and $l_2=3,$ then using Gröbner basis technique again we have that $\alpha_2=\alpha_3+s,$ where $s\in\{-2,-1,1\}.$

\noindent\underline{$(I): t=4, S_{\beta_1}=\{1\},S_{\beta_2}=\{2\},S_{\beta_3}=\{3\},S_{\beta_4}=\{4\}.$}
We obtain the system of equations
\begin{eqnarray*}
	h(x)&=&\beta_1+(x-\alpha_1)^{l_1},\\
	h(x)&=&\beta_2+(x-\alpha_2)^{l_2},\\
	h(x)&=&\beta_3+(x-\alpha_3)^{l_3},\\
	h(x)&=&\beta_4+(x-\alpha_4)^{l_4},
\end{eqnarray*}
where $l_i\geq 2$ (since $\deg h\geq 2.$) Here we prove that this type of composite rational
function cannot exist. One has that for any different $i,j$
$$
(\alpha_i-\alpha_j)^{l_j-l_i}=(-1)^{l_i+1}.
$$
If $l_i=l_j=2,$ then we have a contradiction. Assume that $l_i=2.$ There exist $l_j=l_k=3.$
Hence $\alpha_i=\alpha_j-1$ and $\alpha_i=\alpha_k-1,$ a contradiction.
Let us deal with the case $(l_1,l_2,l_3,l_4)=(3,3,3,3).$ Substituting $\alpha_1+\alpha_2$ into
the system of equations yields $\beta_1=\beta_2+\alpha_1^3-\alpha_2^3.$ We also have that
$\beta_1=\beta_2+(\alpha_1-\alpha_2)^3.$ By combining these equations we get that
$$
-3\alpha_1\alpha_2(\alpha_1-\alpha_2)=0.
$$
In a similar way we obtain
$$
-3\alpha_3\alpha_4(\alpha_3-\alpha_4)=0.
$$
It follows that for some different $i,j$ one has $\alpha_i=\alpha_j,$ a contradiction.

\noindent\underline{$(II): t=2,|S_{\infty}|=2,|S_{\beta_1}|=|S_{\beta_2}|=1 .$} We may assume that $S_{\infty}=\{1,2\},S_{\beta_1}=\{3\},S_{\beta_2}=\{4\}.$
The system of equations in this case is as follows
\begin{eqnarray*}
	h(x)&=&\beta_1+\frac{(x-\alpha_3)^{l_3}}{(x-\alpha_1)^{l_1}(x-\alpha_2)^{l_2}},\\
	h(x)&=&\beta_2+\frac{(x-\alpha_4)^{l_4}}{(x-\alpha_1)^{l_1}(x-\alpha_2)^{l_2}}.
\end{eqnarray*}
If $l_3=l_4=0,$ then it follows that $\beta_1=\beta_2,$ a contradiction. Let us deal with the case $l_3=0,l_4\neq 0$ (in a similar way one can handle the case $l_3\neq 0, l_4=0$). There are only three systems to consider. If $(l_1,l_2,l_3,l_4)=(0,1,0,1)$ or $(1,0,0,1),$ then $\beta_1-1=\beta_2$ and the composite function $f$ has only 2 zeros and poles, a contradiction. If $(l_1,l_2,l_3,l_4)=(1,1,0,2),$ then $\beta_1-1=\beta_2$ and $\alpha_4=\alpha_2\pm 1, \alpha_1=\alpha_2\pm 2.$ In all these cases we obtain a composite function $f$ having only 3 zeros and poles, a contradiction. Let us consider the cases with $l_3\neq 0,l_4\neq 0.$ There are 18 systems to deal with. It turns out that $d$ satisfies the equation
$$
d^{l_4-l_3}=-\frac{(T_4-T_3)^{l_3}(T_3-T_1)^{l_1}(T_3-T_2)^{l_2}}{(T_4-T_1)^{l_1}(T_4-T_2)^{l_2}(T_3-T_4)^{l_4}},
$$
where $\alpha_i=\alpha_0+T_id$ for some $T_i\in\{0,1,2,3\}.$
If $(l_1,l_2,l_3,l_4)=(1,0,2,2),$ then $$(T_1,T_2,T_3,T_4)\in\{(1,3,0,2),(1,3,2,0),(2,0,1,3),(2,0,3,1)\}.$$ In all these cases we obtain a composite function $f$ having only 3 zeros and poles, a contradiction. As an example we compute $f$ when $(T_1,T_2,T_3,T_4)=(1,3,0,2).$ We get that $\beta_2=\beta_1+4d$ and 
\begin{eqnarray*}
h(x)&=&\beta_1+\frac{(x-\alpha_0)^2}{(x-\alpha_0-d)},\\
g(x)&=&(x-\beta_1)(x-\beta_1-4d),\\
f(x)&=&\frac{{\left(x-\alpha_0-2d\right)}^{2} {\left(x-\alpha_0\right)}^{2}}{{\left(x-\alpha_0-d\right)}^{2}}.
\end{eqnarray*}
We exclude the tuple $(l_1,l_2,l_3,l_4)=(0,1,2,2)$ following the same lines.
If $(l_1,l_2,l_3,l_4)=(1,1,1,2),$ then we also have that $d=\frac{1}{T_1+T_2-2T_4}$ and $d=\frac{T_2-T_3}{(T_2-T_4)^2},$ it is easy to check that such tuple $(T_1,T_2,T_3,T_4)$ does not exist. In a very similar way if $(l_1,l_2,l_3,l_4)=(1,1,2,1)$ we obtain that
$$
d=\frac{1}{T_1+T_2-2T_3}=\frac{T_2-T_4}{(T_2-T_3)^2}
$$
and such tuple $(T_1,T_2,T_3,T_4)$ does not exist. If $(l_1,l_2,l_3,l_4)=(2,1,2,3),$ then 
\begin{eqnarray*}
\frac{(T_3-T_4)^3}{(T_3-T_1)^2(T_3-T_2)}&=&1,\\
-\frac{(T_4-T_3)^2}{(T_4-T_1)^2(T_4-T_2)}&=&\frac{4}{27(T_3-T_4)}.
\end{eqnarray*}
There is no solution in $T_i\in\{0,1,2,3\}, T_i\neq T_j, i\neq j.$ We obtain a very similar system of equations in case of $(l_1,l_2,l_3,l_4)=(1,2,3,2),(1,2,2,3),(2,1,3,2).$ If $(l_1,l_2,l_3,l_4)=(1,1,3,3),$ then we get
\begin{eqnarray*}
T_1+T_2&=&T_3+T_4,\\
(T_4-T_1)(T_4-T_2)&=&(T_3-T_1)(T_3-T_2),\\
27(T_2-T_4)^4(T_4-T_1)^2&=&9(T_4-T_3)^3(T_2-T_4)^2(T_4-T_1)-(T_4-T_3)^6.
\end{eqnarray*}	
The above system has no solution in $(T_1,T_2,T_3,T_4).$ If $(l_1,l_2,l_3,l_4)=(1,2,3,1),$ then
\begin{eqnarray*}
	T_1-4T_3+3T_4&=&0,\\
	2T_2+T_3-3T_4&=&0,\\
	(T_4-T_3)^3&=&(T_4-T_1)(T_4-T_2)^2.
\end{eqnarray*}
The system has no solution. The same argument works in case of $(l_1,l_2,l_3,l_4)=(1,2,1,3),(2,1,1,3),(2,1,3,1).$ If $(l_1,l_2,l_3,l_4)=(0,2,2,1),$ then we have
\begin{eqnarray*}
\alpha_2&=&\alpha_4+\frac{1}{4},\\
\alpha_3&=&\alpha_4-\frac{1}{4},
\end{eqnarray*}
hence
\begin{eqnarray*}
	h(x)&=&\beta_1+\frac{(x-\alpha_4+\frac{1}{4})^2}{(x-\alpha_4-\frac{1}{4})^2},\\
	g(x)&=&(x-\beta_1)(x-\beta_1-1),\\
	f(x)&=&\frac{(x-\alpha_4)(x-\alpha_4+\frac{1}{4})^2}{(x-\alpha_4-\frac{1}{4})^4}.
\end{eqnarray*}
That is $f$ has only 3 zeros and poles, a contradiction. We handle in the same way the tuples $(l_1,l_2,l_3,l_4)=(2,0,2,1),(2,0,1,2),(0,2,1,2).$ If $(l_1,l_2,l_3,l_4)=(0,0,1,1),$ then $\deg h(x)=1,$ a contradiction.
\end{itemize}

\noindent\underline{$(II): t=3,|S_{\infty}|=|S_{\beta_1}|=|S_{\beta_2}|=|S_{\beta_3}|=1 .$} We may assume that $S_{\infty}=\{1\},S_{\beta_1}=\{2\},S_{\beta_2}=\{3\},S_{\beta_3}=\{4\}.$
In this case $h(x)$ can be written as follows
\begin{eqnarray*}
	h(x)&=&\beta_1+\frac{(x-\alpha_2)^{l_2}}{(x-\alpha_1)^{l_1}},\\
	h(x)&=&\beta_2+\frac{(x-\alpha_3)^{l_3}}{(x-\alpha_1)^{l_1}},\\
	h(x)&=&\beta_3+\frac{(x-\alpha_4)^{l_4}}{(x-\alpha_1)^{l_1}}.
\end{eqnarray*}
The only possible exponent tuple $(l_1,l_2,l_3,l_4)$ is $(0,1,1,1).$ Thus $\deg h(x)=1,$ a contradiction.

\noindent\underline{$(III): t=3,|S_{\beta_1}|=2,|S_{\beta_2}|=|S_{\beta_3}|=1 .$} We may assume that $S_{\beta_1}=\{1,2\},S_{\beta_2}=\{3\},S_{\beta_3}=\{4\}.$ The only exponent tuple for which $\deg h(x)>1$ is given by $(l_1,l_2,l_3,l_4)$ is $(1,1,2,2).$ We obtain the following system of equations if $d\neq 0:$
\begin{eqnarray*}
(\beta_3-\beta_1)(T_4-T_3)^2+(\beta_2-\beta_3)(T_4-T_1)(T_4-T_2)&=&0\\
(\beta_1-\beta_2)(T_3-T_4)^2+(\beta_2-\beta_3)(T_3-T_1)(T_3-T_2)&=&0\\
(\beta_1-\beta_2)(T_1-T_4)^2+(\beta_3-\beta_1)(T_1-T_3)^2&=&0\\
(\beta_1-\beta_2)(T_2-T_4)^2+(\beta_3-\beta_1)(T_2-T_3)^2&=&0,
\end{eqnarray*}
where $\{T_1,T_2,T_3,T_4\}=\{0,1,2,3\}.$ Solving the above system of equations for all possible tuples $(T_1,T_2,T_3,T_4)$ one gets that $\beta_i=\beta_j$ for some $i\neq j,$ a contradiction.

\noindent\underline{$(III): t=3,|S_{\beta_1}|=|S_{\beta_2}|=|S_{\beta_3}|=|S_{\beta_4}|=1 .$} We may assume that $S_{\beta_1}=\{1\},S_{\beta_2}=\{2\},S_{\beta_3}=\{3\},S_{\beta_4}=\{4\}.$ The only possible exponent tuple is $(l_1,l_2,l_3,l_4)=(1,1,1,1).$ Thus the corresponding $h(x)$ has degree 1, a contradiction. As an example we consider the case 
\begin{eqnarray*}
\alpha_1&=&\alpha_0+d,\\
\alpha_2&=&\alpha_0,\\
\alpha_3&=&\alpha_0+3d,\\
\alpha_4&=&\alpha_0+2d.\\
\end{eqnarray*}
We use equation \eqref{EQ2} here with $(j_1,j_2,j_3)=(1,2,3)$ and $(j_1,j_2,j_3)=(1,2,4).$ If $d\neq 0,$ then we have
\begin{eqnarray*}
	\beta_3&=&3\beta_1-2\beta_2,\\
	\beta_4&=&2\beta_1-\beta_2.
\end{eqnarray*}
Let $k_1,k_2,k_3,k_4\in\mathbb{Z}$ such that $k_1+k_2+k_3+k_4=0.$ Theorem A implies that
\begin{eqnarray*}
	g(x)&=&(x-\beta_1)^{k_1}(x-\beta_2)^{k_2}(x-3\beta_1+2\beta_2)^{k_3}(x-2\beta_1+\beta_2)^{k_4},\\
	h(x)&=&\frac{1}{d}(\beta_1(x-\alpha_0)-\beta_2(x-\alpha_0-d)),\\
	f(x)&=&(x-\alpha_0-d)^{k_1}(x-\alpha_0)^{k_2}(x-\alpha_0-3d)^{k_3}(x-\alpha_0-2d)^{k_4}.
\end{eqnarray*}
\end{proof}
\section{Cases with $n=4$}
In this section we provide some details of the computation corresponding to cases with $n=4,t\in\{2,3,4\},k_1 + k_2 + \ldots + k_{t}\neq 0, S_{\infty}=\emptyset.$
These are the cases which are not mentioned in Section 5 in \cite{PT-CRF}.

\noindent\underline{The case $n=4, t=2$ and $S_{\infty}=\emptyset.$}
There are 134 systems to deal with. We treat only a few representative examples.

If $S_{\beta_1}=\{1,2\},S_{\beta_2}=\{3,4\}$ and $(l_1,l_2,l_3,l_4)=(2,1,2,1),$ then we have
\begin{eqnarray*}
\alpha_1 + 1/2\alpha_2 - \alpha_3 - 1/2\alpha_4&=&0\\
\alpha_2 - 4/3\alpha_3 + 1/3\alpha_4&=&0\\
\alpha_2\alpha_3^2 - 2\alpha_2\alpha_3\alpha_4 + \alpha_2\alpha_4^2 - \alpha_3^2\alpha_4 + 2\alpha_3\alpha_4^2 - \alpha_4^3 - 9\beta_1 + 9\beta_2&=&0\\
\alpha_2 - 4/3\alpha_3 + 1/3\alpha_4&=&0\\
\alpha_3^3 - 3\alpha_3^2\alpha_4 + 3\alpha_3\alpha_4^2 - \alpha_4^3 - 27/4\beta_1 + 27/4\beta_2&=&0.
\end{eqnarray*}
The corresponding rational functions are as follows
\begin{eqnarray*}
f(x)&=&(x-\alpha_1)^{2k_1}(x-\alpha_2)^{k_1}(x-\frac{1}{3}\alpha_1-\frac{2}{3}\alpha_2)^{2k_2}(x-\frac{4}{3}\alpha_1+\frac{1}{3}\alpha_2)^{k_2},\\ 
g(x)&=&(x-\beta_1)^{k_1}(x-\beta_1-\frac{4}{27}(\alpha_1-\alpha_2)^3)^{k_2}\\
h(x)&=&\beta_1+(x-\alpha_1)^2(x-\alpha_2),
\end{eqnarray*}
where $k_1+k_2\neq 0.$ We note that the zeros and poles of $f$ do not form an arithmetic progression for all values of the parameters as the choice $\alpha_1=0,\alpha_2=3$ shows.

If $S_{\beta_1}=\{1,2\},S_{\beta_2}=\{3,4\}$ and $(l_1,l_2,l_3,l_4)=(1,1,0,2),$ then we get the system of equations
\begin{eqnarray*}
\alpha_1+\alpha_2-2\alpha_4&=&0\\
(\alpha_2-\alpha_4)^2-\beta_1+\beta_2&=&0.
\end{eqnarray*}
It yields a decomposable rational function $f$ having only 3 zeros and poles altogether.

If $S_{\beta_1}=\{1,2\},S_{\beta_2}=\{3,4\}$ and $(l_1,l_2,l_3,l_4)=(1,1,1,1),$ then we obtain
\begin{eqnarray*}
\alpha_1 + \alpha_2 - \alpha_3 - \alpha_4&=&0\\
\alpha_2^2 - \alpha_2\alpha_3 - \alpha_2\alpha_4 + \alpha_3\alpha_4 - \beta_1 + \beta_2&=&0. 
\end{eqnarray*}
It yields the following solution
\begin{eqnarray*}
f(x)&=&(x+\alpha_2-\alpha_3-\alpha_4)^{k_1}(x-\alpha_2)^{k_1}(x-\alpha_3)^{k_2}(x-\alpha_4)^{k_2},\\ 
g(x)&=&(x-\beta_1)^{k_1}(x-\beta_1+\alpha_2^2 - \alpha_2\alpha_3 - \alpha_2\alpha_4 + \alpha_3\alpha_4)^{k_2}\\
h(x)&=&\beta_1+(x-\alpha_3-\alpha_4+\alpha_2)(x-\alpha_2),
\end{eqnarray*}
where $k_1+k_2\neq 0.$

If $S_{\beta_1}=\{1,2,3\},S_{\beta_2}=\{4\}$ and $(l_1,l_2,l_3,l_4)=(1,1,1,3),$ then we have
\begin{eqnarray*}
\alpha_1 + \alpha_2 + \alpha_3 - 3\alpha_4&=&0\\
\alpha_2^2 + \alpha_2\alpha_3 - 3\alpha_2\alpha_4 + \alpha_3^2 - 3\alpha_3\alpha_4 + 3\alpha_4^2&=&0\\
\alpha_3^3 - 3\alpha_3^2\alpha_4 + 3\alpha_3\alpha_4^2 - \alpha_4^3 - \beta_1 + \beta_2&=&0.
\end{eqnarray*}
We obtain the following rational functions
\begin{eqnarray*}
f(x)&=&(x-\alpha_1)^{k_1}(x-\alpha_2)^{k_1}(x-\alpha_3)^{k_1}(x-\alpha_4)^{3k_2},\\ 
g(x)&=&(x-\beta_2-(\alpha_3-\alpha_4)^3)^{k_1}(x-\beta_2)^{k_2}\\
h(x)&=&\beta_2+(x-\alpha_4)^3,
\end{eqnarray*}
where $k_1+k_2\neq 0$ and
\begin{eqnarray*}
\alpha_1&=&\frac{1}{2} \, \alpha_{4} {\left(-i \, \sqrt{3} +3\right)} - \frac{1}{2} \, \alpha_{3} {\left(-i \,\sqrt{3} + 1\right)}\\
\alpha_2&=&\frac{1}{2} \, \alpha_{4} {\left(i \, \sqrt{3} +3\right)} + \frac{1}{2} \, \alpha_{3} {\left(-i \,\sqrt{3} - 1\right)}.
\end{eqnarray*}

\noindent\underline{The case $n=4, t=3$ and $S_{\infty}=\emptyset.$}
There are 48 systems to handle in this case. We consider one of these. Let $S_{\beta_1}=\{1\},S_{\beta_2}=\{2,3\},S_{\beta_3}=\{4\}$ and $(l_1,l_2,l_3,l_4)=(1,1,0,1).$
We obtain the system of equations
\begin{eqnarray*}
\alpha_1 - \alpha_4 - \beta_1 + \beta_3&=&0\\
\alpha_2 - \alpha_4 - \beta_2 + \beta_3&=&0. 
\end{eqnarray*}
It follows that $h$ is a linear function, which only provides trivial decomposition. In the remaining cases we have the same conclusion.

\noindent\underline{The case $n=4, t=4$ and $S_{\infty}=\emptyset.$}
Here we get 24 systems to consider. In all cases we have that
$$
\{S_{\beta_1},S_{\beta_2},S_{\beta_3},S_{\beta_4}\}=\{\{1\},\{2\},\{3\},\{4\}\}
$$
and $(l_1,l_2,l_3,l_4)=(1,1,1,1).$ Therefore $h$ is linear, a contradiction.
\bibliography{all}
\end{document}